    \newcommand{\LG}{\mathsf{LieGpd}}
  \newcommand{\CFG}{\mathsf{CFG}}
\def\Hom{\mathrm{Hom}}
\def\Map{\mathcal{M}ap}
\def\SMap{\mathfrak{Map}}
\newcommand{\cG}{{\mathcal G}}
\newcommand{\cH}{{\mathcal H}}
\newcommand{\cI}{{\mathcal I}}
\newcommand{\cJ}{{\mathcal J}}
\newcommand{\cK}{{\mathcal K}}
\newcommand{\cL}{{\mathcal L}}
\newcommand{\cU}{{\mathcal U}}
\newcommand{\uX}{{\underline X}}
\newcommand{\cat}{\mbox{\rm cat}}
\newcommand{\e}{\epsilon}
\newcommand{\ev}{\operatorname{ev}}
\newcommand{\toto}{\rightrightarrows}
\def\id{\mathrm{id}}
\def\U{{\mathfrak U}}
\def\cU{{\mathcal U}}
\def\S{{\mathfrak S}}
\def\B{{\mathfrak B}}
\def\X{{\mathfrak X}}
\def\Y{{\mathfrak Y}}
\newtheorem{theorem}{Theorem}[section]
\newtheorem{proposition}[theorem]{Proposition}
\theoremstyle{definition}
\newtheorem{definition}[theorem]{Definition}
\newtheorem{example}[theorem]{Example}
\numberwithin{equation}{section}
\begin{document}

\title[The Lusternik-Schnirelmann category for a differentiable stack]
{The Lusternik-Schnirelmann category for a differentiable stack}

\author[Samirah Alsulami]
{Samirah Alsulami}
\address{Department of Mathematics\\
University of Leicester\\
University Road, Leicester LE1 7RH, England, UK}
\email{shba2@le.ac.uk}

\author[Hellen Colman]
{Hellen Colman}
\address{Department of Mathematics\\
Wilbur Wright College\\
4300 N. Narragansett Avenue, Chicago, IL 60634, USA}
\email{hcolman@ccc.edu}

\author[Frank Neumann]
{Frank Neumann}
\address{Department of Mathematics\\
University of Leicester\\
University Road, Leicester LE1 7RH, England, UK}
\email{fn8@le.ac.uk}
\date{\today}  

\subjclass{55M30, 14A20, 14D23, 22A22}

\keywords{differentiable stacks, Lie groupoids, LS-category}

\begin{abstract}
We introduce the notion of Lusternik-Schnirelmann category for differentiable stacks and establish its relation with the groupoid Lusternik-Schnirelmann category for Lie groupoids. This extends the notion of Lusternik-Schnirelmann category for smooth manifolds and orbifolds. 
\end{abstract}

\maketitle

\section*{Introduction}

The Lusternik-Schnirelmann category or LS-category of a manifold is a numerical invariant introduced by Lusternik and Schnirelmann \cite{LS} in the early 1930Õs as a lower bound on the number of critical points for any smooth function on a compact smooth manifold. Later it was shown that the Lusternik-Schnirelmann category is in fact a homotopy invariant and it became an important tool in algebraic topology and especially homotopy theory. For an overview and survey on the importance of LS-category in topology and geometry we refer the reader to \cite{Ja1, Ja2, CLOT}.

Fundamental in the definition of LS-category of a smooth manifold or topological space is the concept of a categorical set. 
A subset of a space is said to be categorical if it is contractible in the space. The Lusternik-Schnirelmann category $\cat(X)$ of a smooth manifold $X$ is defined to be the least number of categorical open sets required to cover $X$, if that number is finite, otherwise the category $\cat(X)$ is said to be infinite. 

In this article, we generalize the notion of Lusternik-Schnirelmann category to differentiable stacks with the intention of providing a useful tool and invariant to study homotopy theory, the theory of geodesics and Morse theory of differentiable stacks. Differentiable stacks naturally generalize smooth manifolds and orbifolds and are therefore of interest in many areas of geometry, topology and mathematical physics. They are basically generalized smooth spaces where its points also have automorphism groups. For example, they often appear as an adequate replacement of quotients for general Lie group actions on smooth manifolds, especially when the naive quotient does not exist as a smooth manifold. Many moduli and classification problems like for example the classification of Riemann surfaces or vector bundles on Riemann surfaces naturally lead to the notion of a differentiable stack. It can be expected that the stacky LS-category will be a very useful topological invariant for these kind of generalized smooth spaces which appear naturally in geometry and physics. We aim to study the geometrical and topological aspects of the stacky LS-category and its applications in a follow-up article \cite{CN}. Many of these constructions can also be presented in a purely homotopical manner \cite{Al} by employing the homotopy theory of topological stacks \cite{No1, No2}. 

The new notion of stacky LS-category for differentiable stacks presented here employs the notion of a categorical substack and is again an invariant of the homotopy type of the differentiable stack, in fact of the underlying topological stack.  It generalizes the classical LS-category for manifolds \cite{LS} and we show that it is directly related with the groupoid LS-category for Lie groupoids as defined by Colman \cite{Co1} for Lie groupoids.

The material of this article is organised as follows: In the first section we collect the basic definitions of differentiable stacks and Lie groupoids and establish some fundamental properties. In particular we exhibit the various connections between differentiable stacks and Lie groupoids. The second section recalls the foundations of Lusternik-Schnirelmann category for groupoids and its Morita invariance. In the third section we introduce the new notion of stacky Lusternik-Schnirelmann category and establish its relationship with the groupoid LS-category of the various groupoids introduced in the first section.

\section{Differentiable stacks and Lie groupoids}
In this section we will collect in detail the notions and some of the fundamental properties of
differentiable stacks and Lie groupoids, which we will use later.  
We refer the reader to various resources on differentiable stacks \cite{B1, B2, BX, BN, Fa, H} and 
on Lie groupoids \cite{LTX, MM, TXL, FeN} for more details and specific examples and their interplay.

Differentiable stacks are defined over the category of smooth
manifolds. A smooth manifold here will always mean a finite
dimensional second countable smooth manifold, which need not
necessarily be Hausdorff. We denote the category of smooth manifolds
and smooth maps by $\mathfrak S$.

A {\it submersion} is a smooth map $f: U\rightarrow X$ such that the
derivative $f_{*}: T_uU\rightarrow T_{f(u)}X$ is surjective for all
points $u\in U$. The dimension of the kernel of the linear map $f_*$
is a locally constant function on $U$ and called the {\it relative
dimension} of the submersion $f$. An {\it \'etale morphism} is a
submersion of relative dimension $0$. This means that a morphism $f$
between smooth manifolds is {\it \'etale} if and only if $f$ is a
local diffeomorphism.

The {\it \'etale site} ${\mathfrak S}_{et}$ on the category
$\mathfrak S$ is given by the following Grothendieck
topology on $\mathfrak S$. We call a family $\{U_i\rightarrow X\}$
of morphisms in $\mathfrak S$ with target $X$ a {\it covering
family} of $X$, if all smooth maps $U_i\rightarrow X$ are \'etale
and the total map $\coprod_i U_i\rightarrow X$ is surjective. This
defines a pretopology on $\mathfrak S$ generating a Grothendieck
topology, the {\it \'etale topology} on $\mathfrak S$ (see
\cite{SGA4}, Expos\'e II).

As remarked in \cite{BX}, not all fibre products for two morphisms
$U\rightarrow X$ and $V\rightarrow X$ exist in $\mathfrak S$, but if
at least one of the two morphisms is a submersion, then the fibre
product $U\times_X V$ exists in $\mathfrak S$, which will be enough,
while dealing with differentiable stacks.

\begin{definition}
A {\it category fibred in groupoids} over $\mathfrak S$ is a
category $\mathfrak X$, together with a functor $\pi_{\mathfrak X}:
\mathfrak X\rightarrow \mathfrak S$ such that the following axioms
hold:
\begin{itemize}
\item[(i)] For every morphism $V\rightarrow U$ in $\mathfrak S$, and
every object $x$ of $\mathfrak X$ lying over $U$, there exists an
arrow $y\rightarrow x$ in $\mathfrak X$ lying over $V\rightarrow U$,
\item[(ii)] For every commutative triangle $W\rightarrow V\rightarrow U$
in $\mathfrak S$ and morphisms $z\rightarrow x$ lying over
$W\rightarrow U$ and $y\rightarrow x$ lying over $V\rightarrow U$,
there exists a unique arrow $z\rightarrow y$ lying over
$W\rightarrow V$ such that the composition $z\rightarrow
y\rightarrow x$ is the morphism $z\rightarrow x$.
\end{itemize}
\end{definition}

The axiom (2) ensures that the object $y$ over $V$, which exists
after (1) is unique up to a unique isomorphism. Any choice of such
an object $y$ is called a {\it pullback} of $x$ via the morphism $f:
V\rightarrow U$. We will write as usual $y=x|V$ or $y=f^*x$.

Let $\mathfrak X$ be a category fibred in groupoids over $\mathfrak
S$. Occasionally we will denote by $X_0$ the collection of objects and $X_1$ 
the collection of arrows of the category $\mathfrak X$. 
The subcategory of $\mathfrak X$ of all objects lying over a
fixed object $U$ of $\mathfrak S$ with morphisms being those lying
over the identity morphism $id_U$ is called the {\it fibre} or {\it
category of sections} of $\mathfrak X$ over $U$, which will be
denoted by ${\mathfrak X}_U$ or ${\mathfrak X}(U)$. By definition
all fibres ${\mathfrak X}_U$ are discrete groupoids.

Categories fibred in groupoids form a 2-category, denoted by $\CFG$. The $1$-morphisms are given by functors $\phi:
{\mathfrak X} \rightarrow {\mathfrak Y}$ respecting the projection
functors i. e. $\pi_{\mathfrak Y}\circ \phi = \pi_{\mathfrak X}$ and the
$2$-morphisms are given by natural transformations between these
functors preserving projection functors. Fibre products
exist in $\CFG$ (see \cite{Gr}).

We will say that the categories fibred in groupoids $\X$ and $\Y$ are {\em isomorphic} if there are $1$-morphisms $\phi:\X \rightarrow \Y$ and $\psi:\Y \rightarrow \X$ and  
 $2$-morphisms $T$ and $T'$ such that $T:\phi\circ \psi \Rightarrow \id_{\Y}$ and $T':\psi\circ \phi \Rightarrow \id_{\X}$.

\begin{example}[Identity]
Let $\X$ be the fixed category $\S$. Let $\pi=\id_{\S}: \S \to \S$ be the projection functor. Then $\X= \S$ together with the identity map is a category fibred in groupoids.
\end{example}

\begin{example}[Object]
Given a fixed object $X\in \S$, i.e. a smooth manifold, consider the category $\uX$ whose objects are $(U, f)$ where $f: U \to X$ is a morphism in $\S$ and $U$ an object in $\S$, and whose arrows are diagrams 
\[
\xymatrix{ U\ar[rr]^\phi\ar[rd]_{f}&&V \ar[ld]^{g}\\
&X& }
\]
The projection functor is $\pi:\uX \to \S$ with $\pi((U, f))=U$ and $\pi((U, f),\phi, (V, g))=\phi$. We have that $\uX$ is a category fibred in groupoids.

In particular, in case that $X$ is a point, $X=*$, we have that $\underline *= \S$.
\end{example}

\begin{example}[Sheaves]
Let $F: {\mathfrak S}\rightarrow (Sets)$ be a presheaf, i. e. a
contravariant functor. We get a category fibred in groupoids
${\mathfrak X}$, where the objects are pairs $(U, x)$, with  $U$ a
smooth manifold and $x\in F(U)$ and a morphism $(U, x) \rightarrow
(V, y)$ is a smooth map $f: U \rightarrow V$ such that $x=y|_F U$,
i. e. $x=F(f)(y)$. The projection functor is given by $$\pi:
{\mathfrak X} \rightarrow {\mathfrak S},\,\, (U, x)\mapsto U.$$

Especially any sheaf $F: {\mathfrak S}\rightarrow (Sets)$ gives
therefore a category fibred in groupoids over $\mathfrak S$ and in
particular we see again that every smooth manifold $X$ gives a category fibred in
groupoids $\underline X$ over $\mathfrak S$ as the sheaf represented
by $X$, i. e. where
$${\underline X}(U)= Hom_{\mathfrak S}(U, X).$$
To simplify notation, we will sometimes freely identify $\underline X$ with
the smooth manifold $X$.
\end{example}

Now let us recall the definition of a stack \cite{BX}. In the
following let $\mathfrak S$ always be the category of smooth
manifolds equipped with the \'etale topology as defined above. We
could of course replace the \'etale topology with any other
Grothendieck topology on $\mathfrak S$, but in this article we are
mainly interested in stacks over the \'etale site ${\mathfrak
S}_{et}$.

\begin{definition} A category fibred in groupoids
$\mathfrak X$ over $\mathfrak S$ is a {\it stack} over $\mathfrak S$
if the following gluing axioms hold:
\begin{itemize}
\item[(i)] For any smooth manifold $X$ in $\mathfrak S$, any two objects
$x, y$ in ${\mathfrak X}$ lying over $X$ and any two isomorphisms
$\phi, \psi: x\rightarrow y$ over $X$, such that $\phi|U_i=\psi|U_i$
for all $U_i$ in a covering $\{U_i\rightarrow X\}$ it follows that
$\phi=\psi$.
\item[(ii)] For any smooth manifold $X$ in $\mathfrak S$, any two objects
$x, y \in {\mathfrak X}$ lying over $X$, any covering
$\{U_i\rightarrow X\}$ and, for every $i$, an isomorphism $\phi_i:
x|U_i\rightarrow y|U_i$, such that $\phi|U_{ij}=\phi_j|U_{i j}$ for
all $i,j$, there exists an isomorphism $\phi: x\rightarrow y$ with
$\phi|U_i=\phi$ for all $i$.
\item[(iii)] For any smooth manifold $X$ in $\mathfrak S$, any covering
$\{U_i\rightarrow X\}$, any family $\{x_i\}$ of objects $x_i$ in the
fibre ${\mathfrak X}_{U_i}$ and any family of morphisms
$\{\phi_{ij}\}$, where $\phi_{ij}: x_i|U_{ij} \rightarrow
x_j|U_{ij}$ satisfying the cocycle condition $\phi_{jk}\circ
\phi_{ij}=\phi_{ik}$ in ${\mathfrak X}(U_{ijk})$ there exist an
object $x$ lying over $X$ with isomorphisms $\phi_i:
x|U_i\rightarrow x_i$ such that $\phi_{ij}\circ \phi_i=\phi_j$ in
${\mathfrak X}(U_{ij})$.
\end{itemize}
\end{definition}

The isomorphism $\phi$ in (ii) is unique by (i) and similar from (i)
and (ii) it follows that the object $x$ whose existence is asserted
in (iii) is unique up to a unique isomorphism. All pullbacks
mentioned in the definitions are also only unique up to isomorphism,
but the properties do not depend on choices.

In order to do geometry on stacks, we have to compare them with
smooth manifolds.

A category $\mathfrak X$ fibred in groupoids over $\mathfrak S$ is
{\it representable} if there exists a smooth manifold $X$ such that
$\uX$ is isomorphic to $\mathfrak X$ as categories fibred in groupoids
over $\mathfrak S$.

A morphism of categories fibred in
groupoids $\mathfrak X \rightarrow \mathfrak Y$ is {\it representable}, if for every smooth manifold $U$ and every morphism
$\underline U\rightarrow {\mathfrak Y}$ the fibred product ${\mathfrak
X}\times_{\mathfrak Y} \underline U$ is representable.

A morphism of categories fibred in
groupoids $\mathfrak X \rightarrow \mathfrak Y$ is a {\it representable
submersion}, if it is representable and the induced morphism
of smooth manifolds ${\mathfrak X}\times_{\mathfrak Y} U \rightarrow
U$ is a submersion for every smooth manifold $U$ and every morphism
$\underline U\rightarrow {\mathfrak Y}$.

\begin{definition}
A stack $\mathfrak X$ over $\mathfrak S$ is a {\it differentiable}
or {\it smooth stack} if there exists a smooth manifold $X$ and a
surjective representable submersion $x: X\rightarrow {\mathfrak X}$,
i. e. there exists a smooth manifold $X$ together with a morphism of
stacks $x: X\rightarrow {\mathfrak X}$ such that for every smooth
manifold $U$ and every morphism of stacks $U\rightarrow {\mathfrak
X}$ the fibre product $X\times_{\mathfrak X} U$ is representable and
the induced morphism of smooth manifolds $X\times_{\mathfrak X} U$
is a surjective submersion.
\end{definition}

If $\mathfrak X$ is a differentiable stack, a surjective representable submersion $x:
X \rightarrow {\mathfrak X}$ as before is called a {\it presentation of}
$\mathfrak X$ or {\it atlas for} $\mathfrak X$. It need not be
unique, i. e. a differentiable stack can have different
presentations.

\begin{example} All representable stacks are differentiable stacks.
Let $X$ be a smooth manifold. The category fibred in groupoids
$\underline X$ is in fact a differentiable stack over $\mathfrak S$ since it is representable.
A presentation is given by the identity morphism $\id_X$, which is in fact a diffeomorphism.
\end{example}

\begin{example}[Torsor]\label{BG}
Let $G$ be a Lie group.
Consider the category  $\B G$ which has as objects principal $G$-bundles (or $G$-torsors) $P$ over $S$ and as arrows commutative diagrams
\[
\xymatrix{ P\ar[r]^\psi\ar[d]_\pi&Q\ar[d]^\tau\\
S\ar[r]^{\varphi}& T}
\]
where the map $\psi: P \to Q$ is equivariant.

The category $\B G$ together with the projection functor $\pi: \B G \to \S$  given by $\pi(P\to S)= S$ and $\pi((\psi, \varphi))= \varphi$ is a category fibred in groupoids, in fact a differentiable stack, the {\it classifying stack} of $G$ whose atlas presentation is given by the representable surjective submersion
$*\rightarrow {\mathfrak B}G$.

\end{example}

\begin{example}[Quotient Stack]\label{Quotient}
Let $X$ be a smooth manifold with a smooth (left) action $\rho: G\times X\rightarrow X$ by a Lie group $G$.
Let $[X/G]$ be the category which has as objects triples $(P, S, \mu)$, where $S$ is a smooth manifold of
$\mathfrak S$, $P$ a principal $G$-bundle (or $G$-torsor) over $S$ and $\mu: P\rightarrow X$ a $G$-equivariant smooth map.
A morphism $(P, S, \mu)\rightarrow (Q, T, \nu)$ is a commutative diagram

\[
\xy \xymatrix{ & X &\\ P\ar[ur]^\mu\ar[rr]^{\psi}\ar[d]_\pi& &
Q\ar[ul]_\nu\ar[d]^\tau\\
S\ar[rr]^{\varphi}&& T}
\endxy
\]

\noindent where $\psi: P\rightarrow Q$ is a $G$-equivariant map.
Then $[X/G]$ together with the projection functor $\pi:
[X/G]\rightarrow {\mathfrak S}$ given by $\pi( (P, S, \mu)=S$ and
$\pi((\psi, \varphi))= \varphi$ is a category fibred in groupoids over $\mathfrak S$. $[X/G]$ is
in fact a differentiable stack, the {\it quotient stack} of $G$.
An atlas is given by the representable surjective submersion
$x: X\rightarrow [X/G]$.

If $X=*$ is just a point, we simply recover the differentiable stack ${\mathfrak B}G$ as defined in the previous example, i.e. $[*/G]=\B G$.

In some way, quotient stacks encode in a non-equivariant and systematic way various equivariant data of general Lie group actions, which need not to be free. 
\end{example}

Differentiable stacks are basically incarnations of Lie groupoids.

\begin{definition}
A {\it Lie groupoid} $\mathcal G$ is a groupoid in the category
$\mathfrak S$ of smooth manifolds, i. e.
$${G_1\rightrightarrows G_0}$$
such that the space of arrows $G_1$ and the space of objects $G_0$
are smooth manifolds and all structure morphisms
$$m: G_1\times_{G_0} G_1\rightarrow G_1, \, s,t: G_1\rightarrow
G_0,\, i: G_1\rightarrow G_0, \, e: G_0\rightarrow G_1$$ are smooth
maps and additionally the source map $s$ and the target map $t$ are
submersions.
\end{definition}
Morphisms between Lie groupoids are given by functors $(\phi_1, \phi_0)$ where $\phi_i: G_i\to G'_i$ is a smooth map. We will call them {\em strict morphisms}.

Lie groupoids, strict morphisms and natural transformations form a 2-category that we denote $\LG$.

We will show now a construction of a Lie groupoid associated to a differentiable stack.
Let $\mathfrak X$ be a differentiable stack with a given
presentation $x: X\rightarrow {\mathfrak X}$. We can associate to
($\mathfrak X, x)$ a Lie groupoid ${\mathcal G}(x)=G_1\rightrightarrows
G_0$ as follows: Let $G_0=X$ and $G_1=X\times_{\mathfrak X} X$. The source and
target morphisms $s, t: X\times_{\mathfrak X} X\rightrightarrows X$
of $\mathcal G$ are given as the two canonical projection morphisms.
The composition of morphisms $m$ in $\mathcal G$ is given as
projection to the first and third factor
$$X\times_{\mathfrak X} X\times_{\X} X \cong (X\times_{\mathfrak X} X)\times_X
(X\times_{\mathfrak X} X)\rightarrow X\times_{\mathfrak X} X.$$ The
morphism which interchanges factors $X\times_{\X} X\rightarrow
X\times_{\X} X$ gives the inverse morphism $i$ and the unit morphism
$e$ is given by the diagonal morphism $X\rightarrow X\times_{\X} X$.
Because the presentation $x: X\rightarrow {\mathfrak X}$ of a
differentiable stack is a submersion, it follows that the source and
target morphisms $s, t: X\times_{\mathfrak X} X\rightrightarrows X$
are submersions as induced maps of the fibre product.

The Lie groupoid associated to the differentiable stack $\mathfrak X$ in this way is also part of a simplicial smooth manifold $X_{\bullet}$, whose homotopy type encodes the homotopy type of $\mathfrak X$ \cite{No2, FeN, S}.

Given instead a Lie groupoid we can associate a differentiable stack to it. Basically this is a generalization of Example \ref{BG} where we associate to a Lie group $G$ the classifying stack ${\mathfrak B}G$.

Let's introduce first the notions of {\em groupoid action} and {\em groupoid torsor}.

\begin{definition}[Action of a groupoid on a manifold]
Let ${\cG}$ be a Lie groupoid $G_1\rightrightarrows G_0$ and
$P$ a manifold in $\S$. 
An {\em action of $\cG$ on  $P$} is given by 
\begin{enumerate}
\item[(i)] an anchor map $a: P \to G_0$
\item[(ii)] an action map $\mu: G_1 \times_{s,a} P \to P$
\end{enumerate}
such that $t(k)=a(k\cdot p)$ for all $(k, p)\in G_1 \times P$ with $s(k)=a(p)$, satisfying the standard action properties: $e(a(p)) \cdot p = p$ and $(k \cdot p)\cdot h=(gh)\cdot p$ whenever the operations are defined.
\end{definition}

\begin{definition}[Groupoid torsor]
Let ${\cG}$ be a Lie groupoid $G_1\rightrightarrows G_0$ and
$S$ a manifold in $\S$.  A {\em $\cG$-torsor over $S$} is given by 

\begin{enumerate}
\item[(i)] a manifold $P$ together with 
\item[(ii)] a surjective submersion $\pi: P\rightarrow S$ and
\item[(iii)] an action of $\cG$ on  $P$ 
\end{enumerate}
such that for all $p, p'\in P$ with $\pi(p)=\pi(p')$ there exists a unique $k\in
G_1$ such that $k\cdot p=p'$ 

\end{definition}

Let $\pi:
P\rightarrow S$ and $\rho: Q\rightarrow T$ be $\cG$-torsors. A {\it
morphism} of $\cG$-torsors is given by a commutative diagram 
\[
\xymatrix{ P\ar[r]^\psi\ar[d]_\pi&Q\ar[d]^\tau\\
S\ar[r]^{\varphi}& T}
\]
such that $\psi$ is a $\cG$-equivariant map.

\begin{example}[$\cG$-torsors]\label{BGr}
Let ${\mathcal G}$ be a fixed Lie groupoid $G_1\rightrightarrows G_0$. Consider the category  $\B\cG$ which has as objects $\cG$-torsors $P$ over $S$ and as  arrows morphisms of $\cG$-torsors as described above.

Then $\B\cG$ is a category fibred in groupoids over $\mathfrak
S$ with  projection functor $\pi: \B\cG   \rightarrow {\S}$
given by $\pi(P\to S)= S$ and $\pi((\psi, \varphi))= \varphi$. Moreover, $\B\cG$ is a differentiable stack.
 \end{example}

We have the following fundamental property (see for example \cite[Prop. 2.3]{BX}) of $\B\cG$.
\begin{theorem}
For every Lie groupoid ${\mathcal G}={G_1\rightrightarrows G_0}$ the category
fibred in groupoids ${\mathfrak B}\cG$ of $\cG$-torsors is a differentiable stack
with a presentation $\tau_0: G_0\rightarrow {\mathfrak B}\cG$. 
\end{theorem}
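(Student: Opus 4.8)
The plan is to verify the two defining properties of a differentiable stack for $\B\cG$: first that it is a stack (the gluing axioms hold), and second that the candidate morphism $\tau_0 : G_0 \rightarrow \B\cG$ is a surjective representable submersion. The stack axioms follow from descent for $\cG$-torsors: a $\cG$-torsor over $S$ is a manifold $P$ with extra structure living over $S$, and manifolds together with their submersions, anchor maps and action maps form sheaves on the \'etale site ${\mathfrak S}_{et}$. Concretely, given a covering $\{U_i \rightarrow S\}$ and $\cG$-torsors over each $U_i$ that agree on overlaps with compatible gluing isomorphisms, one glues the underlying manifolds $P_i$ along the identifications (using that smooth manifolds and smooth maps satisfy descent for \'etale coverings), and checks that the glued object carries a unique induced anchor, projection and $\cG$-action making it a $\cG$-torsor over $S$. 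The cocycle condition ensures the gluing is well defined, so axioms (i)--(iii) of the definition of a stack hold.

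The main content is exhibiting $\tau_0 : G_0 \rightarrow \B\cG$ and showing it is a presentation. The morphism $\tau_0$ should send a smooth map $f : U \rightarrow G_0$ (i.e.\ an object of $\underline{G_0}$ over $U$) to the \emph{trivial} $\cG$-torsor associated to $f$, namely the pullback $f^* G_1 = U \times_{f, G_0, t} G_1$ with its canonical projection to $U$, anchor given by $s$, and $\cG$-action by left multiplication; this is the groupoid analogue of the trivial $G$-bundle in Example~\ref{BG}. First I would check $\tau_0$ is well defined on morphisms and is a genuine $1$-morphism of categories fibred in groupoids. Then I would verify representability: for any smooth manifold $U$ and any morphism $\underline U \rightarrow \B\cG$ classifying a $\cG$-torsor $P \rightarrow U$, I must show the fibre product $G_0 \times_{\B\cG} \underline U$ is representable. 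The key computation is that this fibre product is represented precisely by $P$ itself, because a point of the fibre product over $T$ consists of a map $T \rightarrow G_0$, a map $T \rightarrow U$, and an isomorphism between the resulting trivial torsor and the pullback of $P$ --- and giving such an isomorphism is equivalent to giving a section, i.e.\ a lift $T \rightarrow P$. Hence $G_0 \times_{\B\cG} \underline U \cong P$.

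Once the fibre product is identified with $P$, the submersion and surjectivity conditions are immediate from the torsor axioms: the induced morphism $P \rightarrow U$ is exactly the structure map $\pi$ of the torsor, which is by definition a surjective submersion. This simultaneously shows $\tau_0$ is a representable submersion and that it is surjective (since representable surjectivity is checked by the surjectivity of $\pi$ for every atlas $U \rightarrow \B\cG$). Therefore $\B\cG$ is a differentiable stack with atlas $\tau_0$.

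I expect the main obstacle to be the representability step, specifically the clean identification of the $2$-fibre product $G_0 \times_{\B\cG} \underline U$ with the total space $P$. This requires carefully unwinding the $2$-categorical fibre product in $\CFG$: its objects involve a trivial torsor, a pullback torsor, and a chosen isomorphism between them, and one must show that the groupoid of such data is equivalent to the discrete manifold $P$. The subtlety is checking that an isomorphism of $\cG$-torsors over $T$ from the trivial torsor $\tau_0(g)$ to $f^* P$ is uniquely determined by, and produces, a point of $P$ lying over the relevant points of $G_0$ and $U$ --- this uses the freeness and transitivity built into the torsor definition (the unique $k \in G_1$ with $k\cdot p = p'$). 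Getting the naturality and the smooth structure on this identification correct, rather than just a set-theoretic bijection, is where the real care is needed; the remaining submersion and surjectivity claims are then formal.
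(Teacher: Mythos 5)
Your proposal is correct and takes essentially the approach of the paper's source: the paper gives no proof of its own, citing \cite[Prop.~2.3]{BX}, and your argument --- descent for $\cG$-torsors on the \'etale site to verify the stack axioms, the trivial-torsor functor $\tau_0$ sending $f\colon U\to G_0$ to $U\times_{f,G_0,t}G_1$, and the identification $G_0\times_{\B\cG}\underline{U}\cong \underline{P}$ via the canonical unit section together with free transitivity of the fibrewise action --- is precisely the argument found there, with the induced map then being the structure map $\pi\colon P\to U$, a surjective submersion by the torsor axioms. One small point worth noting: gluing along \'etale coverings can produce non-Hausdorff manifolds, which is exactly why the paper allows its manifolds to be non-Hausdorff, so your descent step is legitimate in this setting.
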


The stack ${\mathfrak B}\cG$ is also called the {\it classifying stack} of $\cG$-torsors.
It follows from this also that the Lie groupoid ${\mathcal G}={G_1\rightrightarrows G_0}$
is isomorphic to the Lie groupoid ${\mathcal G}(\tau_0)$ associated to the atlas 
$\tau_0: G_0\rightarrow {\mathfrak B}\cG$ of the stack ${\mathfrak B}\cG$.

Under this correspondence between differentiable stacks and Lie groupoids, the quotient stack $[X/G]$ of an action of a Lie group $G$ on a smooth manifold $X$ as described in Example \ref{Quotient} corresponds to the action groupoid $G\times X\rightrightarrows X$ \cite{BX, H}.  In particular, if $X$ is just a point the classifying stack ${\mathfrak B}G$ of a Lie group $G$ corresponds to the Lie groupoid $G\rightrightarrows *$. 

As the presentations of a differentiable stack are not unique, the
associated Lie groupoids might be different. In order to define
algebraic invariants, like cohomology or homotopy groups for
differentiable stacks they should however not depend on a chosen
presentation of the stack. Therefore it is important to know, when
two different Lie groupoids give rise to isomorphic stacks. This
will be the case when the Lie groupoids are Morita equivalent.

\begin{definition}
Let ${\mathcal G}={G_1\rightrightarrows G_0}$ and ${\mathcal
H}=H_1\rightrightarrows H_0$ be Lie groupoids. A morphism of Lie
groupoids is a smooth functor $\phi: {\mathcal G}\rightarrow
{\mathcal H}$ given by two smooth maps $\phi=(\phi_1, \phi_0)$ with
$$\phi_0: G_0\rightarrow H_0, \, \phi_1: G_1\rightarrow H_1$$
which commute with all structure morphisms of the groupoids. A
morphism $\phi: {\mathcal G}\rightarrow {\mathcal H}$ of Lie
groupoids is a {\it Morita morphism} or {\it essential equivalence}
if
\begin{itemize}
\item[(i)] $\phi_0: G_0\rightarrow H_0$ is a surjective submersion,
\item[(ii)] the diagram
\[
\xy \xymatrix{ G_1\ar[r]^{(s,t)} \ar[d]_{\phi_1}&
G_0\times G_0\ar[d]^{\phi_0 \times \phi_0}\\
H_1\ar[r]^{(s,t)}& H_0\times H_0}
\endxy
\]
is cartesian, i. e. $G_1\cong H_1\times_{H_0\times H_0} G_0\times
G_0$.
\end{itemize}
Two Lie groupoids $\mathcal G$ and $\mathcal H$ are {\it Morita
equivalent}, if there exists a third Lie groupoid $\mathcal K$ and
Morita morphisms
$${\mathcal G}\stackrel{\phi}\leftarrow {\mathcal K}\stackrel{\psi}\rightarrow {\mathcal
H}$$
\end{definition}

We have the following main theorem concerning the relation of the
various Lie groupoids associated to various presentations of a
differentiable stack \cite{BX}, Theorem 2.24.

\begin{theorem}\label{MIst}
Let ${\mathcal G}$ and ${\mathcal
H}$ be Lie groupoids. Let $\mathfrak X$
and $\mathfrak Y$ be the associated differentiable stacks, i. e.
$\X=\B\cG$ and
$\Y=\B\cH$. Then the
following are equivalent:
\begin{itemize}
\item[(i)] the differentiable stacks $\mathfrak X$ and $\mathfrak Y$ are
isomorphic,
\item[(ii)] the Lie groupoids $\mathcal G$ and $\mathcal H$ are
Morita equivalent.
\end{itemize}
\end{theorem}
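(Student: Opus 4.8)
The plan is to prove the two implications separately, using throughout the facts recorded after Example~\ref{BGr}: that $\B\cG$ is a differentiable stack with canonical presentation $\tau_0\colon G_0\to\B\cG$, and that the original groupoid is recovered as $\cG\cong\cG(\tau_0)$ (and likewise $\cH\cong\cG(\sigma_0)$ for the atlas $\sigma_0\colon H_0\to\B\cH$). Thus the problem is really one of comparing the groupoids produced by two different atlases of a single stack.

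First I would treat (ii)$\Rightarrow$(i). The key construction is that a strict morphism $\phi=(\phi_1,\phi_0)\colon\cG\to\cH$ induces a morphism of classifying stacks $\B\phi\colon\B\cG\to\B\cH$, sending a $\cG$-torsor $\pi\colon P\to S$ with anchor $a\colon P\to G_0$ to the associated $\cH$-torsor $(H_1\times_{H_0}P)/\cG$ obtained by inducing the action along $\phi$ over $\phi_0\colon G_0\to H_0$ --- the groupoid analogue of change of structure group. I would then show that when $\phi$ is a Morita morphism, $\B\phi$ is an isomorphism of stacks in the sense defined after the $2$-category $\CFG$: conditions (i) and (ii) in the definition of a Morita morphism are precisely what is needed to build a quasi-inverse by pulling $\cH$-torsors back along $\phi$, and to produce the $2$-isomorphisms $T,T'$ witnessing that the two constructions are mutually inverse. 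Applying this to the defining zigzag $\cG\xleftarrow{\phi}\cK\xrightarrow{\psi}\cH$ of a Morita equivalence yields isomorphisms $\B\cG\xleftarrow{\sim}\B\cK\xrightarrow{\sim}\B\cH$, and composing them gives the desired isomorphism $\X\cong\Y$.

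For (i)$\Rightarrow$(ii) I would argue more formally using atlases and fibre products. Given an isomorphism $F\colon\X\xrightarrow{\sim}\Y$, the composite $F\circ\tau_0\colon G_0\to\Y$ is a second atlas of $\Y$ alongside $\sigma_0\colon H_0\to\Y$. I would set $K_0:=G_0\times_{\Y}H_0$, which is representable (a smooth manifold) because both maps to $\Y$ are representable submersions, and whose two projections $K_0\to G_0$ and $K_0\to H_0$ are surjective submersions, being pullbacks of the atlas maps. The induced map $\kappa\colon K_0\to\Y$ is then again an atlas, and I let $\cK:=K_0\times_{\Y}K_0\toto K_0$ be the associated Lie groupoid $\cG(\kappa)$. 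The projections induce strict morphisms $\cK\to\cH$ and, transporting all fibre products over $\Y$ back along $F$ to fibre products over $\X$, a strict morphism $\cK\to\cG$.

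Finally I would check that these two morphisms are Morita morphisms. Condition (i) is the surjective-submersion statement just noted. For condition (ii), the cartesian-square requirement reduces to a formal identity of iterated fibre products: since $H_1=H_0\times_{\Y}H_0$ and $K_0\to H_0\to\Y$ agrees with $\kappa$, one gets $H_1\times_{H_0\times H_0}(K_0\times K_0)\cong K_0\times_{\Y}K_0=K_1$, and the identical computation with $G_1=G_0\times_{\X}G_0$ handles $\cK\to\cG$ after transport along $F$. This exhibits the Morita equivalence $\cG\xleftarrow{\sim}\cK\xrightarrow{\sim}\cH$. I expect the main obstacle to be the direction (ii)$\Rightarrow$(i), and specifically the careful construction of the induced-torsor functor $\B\phi$ together with the verification that it is an equivalence when $\phi$ is essential; by contrast the fibre-product bookkeeping in (i)$\Rightarrow$(ii) is routine once the atlas $K_0=G_0\times_{\Y}H_0$ is in place.
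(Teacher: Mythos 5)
Your proposal is correct, but note that the paper itself gives no proof of this statement: it is quoted verbatim from Behrend--Xu (\cite{BX}, Theorem 2.24). Your argument reconstructs exactly the standard proof found there --- the induced-torsor functor $P\mapsto (H_1\times_{H_0}P)/\cG$ (with quasi-inverse $Q\mapsto Q\times_{H_0,\phi_0}G_0$, whose torsor axioms follow precisely from the two Morita conditions) for (ii)$\Rightarrow$(i), and the comparison groupoid built on the atlas $K_0=G_0\times_{\Y}H_0$, with the cartesian condition reduced to the fibre-product identity $H_1\times_{H_0\times H_0}(K_0\times K_0)\cong K_0\times_{\Y}K_0$, for (i)$\Rightarrow$(ii) --- so there is nothing to add beyond routine verifications you have already flagged.
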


As a special case we have the following fundamental property concerning different presentations of a differentiable stack $\mathfrak X$.

\begin{proposition}\label{MIstpres} Let $\mathfrak X$ be a differentiable stack with two given presentations $x: X\rightarrow {\mathfrak X}$ and $x': X'\rightarrow {\mathfrak X}$. Then the associated Lie groupoids $\cG(x)$ and $\cG(x')$ are Morita equivalent.
\end{proposition}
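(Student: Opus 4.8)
The plan is to exhibit a single Lie groupoid that maps to both $\cG(x)$ and $\cG(x')$ by essential equivalences; by the definition of Morita equivalence, such a span $\cG(x)\xleftarrow{}\mathcal K\xrightarrow{}\cG(x')$ is exactly what must be produced. The bridge groupoid $\mathcal K$ will be the one associated to the common refinement of the two atlases, namely the fibre product atlas.

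\textbf{Constructing the refinement atlas.} First I would set $Z := X\times_{\mathfrak X}X'$. Because $x$ and $x'$ are representable submersions, this fibre product is representable, i.e.\ a smooth manifold, and the two projections $p\colon Z\to X$ and $p'\colon Z\to X'$ are surjective submersions, being pullbacks of the surjective representable submersions $x'$ and $x$ respectively. There is a canonical $2$-isomorphism $x\circ p\cong x'\circ p'$; write $z\colon Z\to\mathfrak X$ for the resulting morphism. I would then verify that $z$ is again a surjective representable submersion, hence a presentation of $\mathfrak X$: representability and the submersion property pass along the composite $Z\xrightarrow{p}X\xrightarrow{x}\mathfrak X$ since $p$ is a submersion of manifolds and $x$ is a representable submersion, while surjectivity is immediate. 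Thus $\mathcal K:=\cG(z)$ is defined, with object manifold $Z$ and arrow manifold $Z\times_{\mathfrak X}Z$.

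\textbf{The comparison morphisms and essential equivalence.} The projection $p$ is a morphism of atlases over $\mathfrak X$, compatible with $z$ and $x$ up to the canonical $2$-isomorphism, so it induces a strict morphism of Lie groupoids $\Phi\colon\cG(z)\to\cG(x)$, with $\Phi_0=p$ on objects and $\Phi_1\colon Z\times_{\mathfrak X}Z\to X\times_{\mathfrak X}X$ induced functorially from $p$ (reading an isomorphism $z(z_1)\to z(z_2)$ as one $x(p(z_1))\to x(p(z_2))$ via $z\cong x\circ p$); symmetrically $p'$ yields $\Phi'\colon\cG(z)\to\cG(x')$. To see that $\Phi$ is an essential equivalence I would check the two conditions of the definition. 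Condition (i) holds because $\Phi_0=p$ is a surjective submersion by the previous step. For condition (ii) I must show the square with horizontal maps the source-target pairs and vertical maps $\Phi_1$, $p\times p$ is cartesian, i.e.\ that
\[
Z\times_{\mathfrak X}Z\;\cong\;(X\times_{\mathfrak X}X)\times_{X\times X}(Z\times Z).
\]
Unwinding the right-hand side, an element is a pair $(z_1,z_2)\in Z\times Z$ together with an isomorphism $x(p(z_1))\to x(p(z_2))$ in $\mathfrak X$, while the left-hand side is a pair $(z_1,z_2)$ together with an isomorphism $z(z_1)\to z(z_2)$; the canonical $2$-isomorphism $z\cong x\circ p$ identifies these two data, giving the required isomorphism of manifolds. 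The identical computation applies to $\Phi'$.

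\textbf{Main obstacle.} The only genuinely delicate point is $2$-categorical bookkeeping: since $x\circ p$ and $z$ agree merely up to a \emph{chosen} $2$-isomorphism, I must transport this $2$-isomorphism coherently through the identification of arrow manifolds above, so that the comparison square really commutes and the displayed isomorphism of fibre products is canonical rather than only abstractly present. Once this coherence is handled, both $\Phi$ and $\Phi'$ are essential equivalences, and the span $\cG(x)\xleftarrow{\Phi}\cG(z)\xrightarrow{\Phi'}\cG(x')$ exhibits the desired Morita equivalence between $\cG(x)$ and $\cG(x')$, proving the proposition.
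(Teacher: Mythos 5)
Your proof is correct, but it takes a genuinely different route from the paper. The paper offers no construction at all: it presents the proposition as a special case of Theorem \ref{MIst} (quoted from Behrend--Xu). Since each presentation $x:X\to\X$ induces an isomorphism of stacks $\B\cG(x)\cong\X$, the classifying stacks $\B\cG(x)$ and $\B\cG(x')$ are isomorphic, and Theorem \ref{MIst} then converts this isomorphism into a Morita equivalence of $\cG(x)$ and $\cG(x')$. You instead build the equivalence by hand: the fibre-product atlas $Z=X\times_{\X}X'$ is again a presentation, and the projections induce strict morphisms $\cG(z)\to\cG(x)$ and $\cG(z)\to\cG(x')$ which you check are essential equivalences directly against the paper's two-condition definition (surjective submersion on objects, cartesian source--target square on arrows). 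Your verification is sound: $p$ and $p'$ are surjective submersions as pullbacks of the representable surjective submersions $x'$ and $x$, and the identification $Z\times_{\X}Z\cong(X\times_{\X}X)\times_{X\times X}(Z\times Z)$ is the standard fibre-product manipulation, matching exactly the cartesian condition $G_1\cong H_1\times_{H_0\times H_0}G_0\times G_0$ in the paper's definition of Morita morphism. The coherence issue you flag in your last paragraph is real but mild: if you define $z:=x\circ p$ on the nose, the only $2$-isomorphism in play is the canonical one $x\circ p\Rightarrow x'\circ p'$ supplied by the fibre product itself, and it identifies the arrow manifolds canonically, as you indicate. As for what each approach buys: the paper's argument is shorter but leans entirely on the black box of Theorem \ref{MIst}, which in turn needs the identification of $\B\cG(x)$ with $\X$; yours is self-contained, produces the comparison groupoid $\cG(z)$ explicitly (useful if one later needs the equivalence itself rather than its mere existence), and in effect reproves the relevant direction of Theorem \ref{MIst} in the special case at hand.
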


Therefore Lie groupoids present isomorphic differentiable stacks if and only if
they are Morita equivalent or in other words differentiable stacks
correspond to Morita equivalence classes of Lie groupoids.

We now recall the fundamental notion of a smooth morphism between differentiable stacks (see for example \cite{H}).

\begin{definition}[Smooth morphism]
An arbitrary morphism $\X \to \Y$ of differentiable stacks is {\em smooth}, if there are atlases $X\to \X$ and $Y\to \Y$ such that the induced morphism from the fibered product $X \times_{\Y} Y\to Y$ in the diagram below is a smooth map between manifolds.
\[
\xy \xymatrix{ X \times_{\Y} Y\ar[r]^{} \ar[dd]_{}&
X\ar[d]^{}\\
&\X\ar[d]^{}\\
Y\ar[r]^{}& \Y}
\endxy
\]
\end{definition}

Let $\U$ be a subcategory of $\X$. Recall that a subcategory is called {\em saturated} if whenever it contains an object $x$ then it contains the entire isomorphism class $\bar x$ of that object and is called {\em full} if whenever it contains an arrow between $x$ and $y$, it contains the entire set $\Hom(x,y)$ of arrows.

Let $\pi: \X\to \S$ be a differentiable stack and $x: X\to \X$ be an atlas. Let $U\subset X$ be an open subset and consider the saturation $U_0$ of the image $x(U)$ in $X_0$, i.e.
$$U_0=\{z\in X_0 |\; z\in \bar x \mbox{ for some } x\in U\}.$$
The full subcategory $\U$ on $U_0$ is $U_1\toto U_0$ where $U_1=\{g\in X_1 |\; s(g), t(g)\in U_0\}$.

\begin{definition}[Restricted substack] Let $\pi: \X\to \S$ be a differentiable stack with atlas $x: X\to \X$
and $U\subset X$ be an open set. Consider the full subcategory $\U$ on $U_0$ and let $\pi':=\pi\circ i$, where $i: \U\to \X$ is the inclusion. We say that $\U$ with the projection $\pi': \U\to \S$ is the {\em restricted substack} of $\X$ to $\U$.
\end{definition}

\begin{definition}[Constant morphism] Let $c:\X \to \Y$ be a smooth morphism between differentiable stacks. We say that $c$ is a {\em constant morphism} if there are presentations $X\to \X$ and $Y\to \Y$ such that the induced morphism from the fibered product $X \times_{\Y} Y\to Y$ is a constant map.
\end{definition}

For instance, any smooth morphism  $\X \to \Y$ where $\Y$ admits a presentation by a point $*\to \Y$ is a constant morphism.

\begin{example}\label{circle}
Let $S^1$ act on $S^1$ by rotation and consider the quotient stack $\X$ associated to this action, $\X=[S^1/{S^1}]$. We will show that the identity map $\id_{[S^1/{S^1}]}:[S^1/{S^1}]\to [S^1/{S^1}]$ is a constant map. The groupoid $S^1\times {S^1}\rightrightarrows S^1$ is Morita equivalent to a point groupoid $* \rightrightarrows*$, therefore the stacks $\X=[S^1/{S^1}]$ and $\underline *$ are isomorphic. Since $*\to \underline *$ is a presentation for $\underline *$ it follows that $*\to [S^1/{S^1}]$ is a presentation for $\X$. Hence any map with codomain $\X=[S^1/{S^1}]$ is a constant morphism of stacks.
\end{example}

Let us finish this section by remarking that it is also possible to define stacks and groupoids over the more general category of diffeological spaces instead of using the category of smooth manifolds as we have done here \cite{So, I-Z, CLW}. 

\section{Lusternik-Schnirelmann category for Lie groupoids}

We will first recall the definition and fundamental properties for the notion of Lusternik-Schnirelmann category for Lie groupoids.
The most important property here is that the Lusternik-Schnirelmann category of a Lie groupoid is in fact Morita invariant, 
which means that it is in fact an invariant of the associated differentiable stack. We will follow here the general approach of \cite{Co1}, where
the notion of Lusternik-Schnirelmann category for Lie groupoids was first introduced.

Our context will be the Morita bicategory of Lie groupoids $\LG(E^{-1})$ obtained from $\LG$ by formally inverting the essential equivalences $E$. Objects in this bicategory are Lie groupoids,  $1$-morphisms are {\em generalized maps}
\[\cK\overset{\e}{\gets}\cJ\overset{\phi}{\to}\cG\]
such that $\e$ is an essential equivalence and 
2-morphisms from $\cK\overset{\e}{\gets}\cJ\overset{\phi}{\to}\cG$ to $\cK\overset{\e'}{\gets}\cJ'\overset{\phi'}{\to}\cG$ are given by classes of diagrams:
$$
\xymatrix{ &
{\cJ}\ar[dr]^{\phi}="0" \ar[dl]_{\e }="2"&\\
{\cK}&{\cL} \ar[u]_{u} \ar[d]^{v}&{\cG}\\
&{\cJ'}\ar[ul]^{\e'}="3" \ar[ur]_{\phi'}="1"&
\ar@{}"0";"1"|(.4){\,}="7"
\ar@{}"0";"1"|(.6){\,}="8"
\ar@{}"7" ;"8"_{\sim}
\ar@{}"2";"3"|(.4){\,}="5"
\ar@{}"2";"3"|(.6){\,}="6"
\ar@{}"5" ;"6"^{\sim}
}
$$
where $\cL$ is a topological groupoid, and $u$ and $v$ are essential  equivalences.

The {\em path groupoid} of $\cG$ is defined as the mapping groupoid  in this bicategory, $P\cG=\Map(\cI, \cG)$.

Let $(\sigma, f): \cK\overset{\sigma}{\gets}\cK'\overset{f}{\to} \cG$ and  $(\tau, g): \cK\overset{\tau}{\gets}\cK''\overset{g}{\to} \cG$ be generalized maps.
The map $(\sigma, f)$ is {\em groupoid homotopic} to $(\tau, g)$ if there exists $(\e, H):  \cK\overset{\e}{\gets}\tilde \cK\overset{H}{\to} P\cG$ and two commutative diagrams up to natural transformations:

$$
\xymatrix{ &
{\tilde \cK}\ar[dr]^{\ev_{0}H}="0" \ar[dl]_{\e }="2"&\\
{\cK}&{\cL_0} \ar[u]_{u_0} \ar[d]^{v_0}&{\cG}\\
&{\cK'}\ar[ul]^{\sigma}="3" \ar[ur]_{f}="1"&
\ar@{}"0";"1"|(.4){\,}="7"
\ar@{}"0";"1"|(.6){\,}="8"
\ar@{}"7" ;"8"_{\sim}
\ar@{}"2";"3"|(.4){\,}="5"
\ar@{}"2";"3"|(.6){\,}="6"
\ar@{}"5" ;"6"^{\sim}
}\;\;\;\;\;\;\;
\xymatrix{ &
{\tilde \cK}\ar[dr]^{\ev_{1}H}="0" \ar[dl]_{\e }="2"&\\
{\cK}&{\cL_1} \ar[u]_{u_1} \ar[d]^{v_1}&{\cG}\\
&{\cK''}\ar[ul]^{\tau}="3" \ar[ur]_{g}="1"&
\ar@{}"0";"1"|(.4){\,}="7"
\ar@{}"0";"1"|(.6){\,}="8"
\ar@{}"7" ;"8"_{\sim_{}}
\ar@{}"2";"3"|(.4){\,}="5"
\ar@{}"2";"3"|(.6){\,}="6"
\ar@{}"5" ;"6"^{\sim}
}
$$
where $\cL_i$ is an action groupoid, and $u_i$ and $v_i$ are equivariant essential  equivalences for $i=0,1$.

Similarly as for differentiable stacks we also have the concept of a restricted groupoid over a given invariant subset and that of a generalized constant map, which we will need to define LS-category for Lie groupoids. 

\begin{definition}
Let ${\mathcal G}=G_1\rightrightarrows G_0$ be a Lie groupoid. An open set $U\subset G_0$ is {\it invariant} if $t(s^{-1}(U))\subset U$.  The {\it restricted groupoid $\cU$} to an invariant set $U\subset G_0$ is the full groupoid over $U$. In other words, $U_0=U$ and $U_1=\{g\in G_1: s(g), t(g) \in U\}$. We write $\cU=\cG|_U\subset \cG$.
\end{definition}

\begin{definition} We say that the map $(\e, c): \cK\overset{\e}{\gets}\cK'\overset{c}{\to} \cG$ is a {\em generalized constant map} if for all $x\in K'_0$ there exists $g\in G_1$ with $s(g)=x_0$ such that $c(x)=t(g)$ for a fixed $x_0\in G_0$.
\end{definition}
In other words, the image of the generalized map $(\e, c)$ is contained in a fixed orbit $\mathcal{O}$, the orbit of $x_0$.

\begin{definition} For an invariant open set $U\subset G_0$, we will say that the restricted groupoid $\cU$ is $\cG$-{\em categorical} if the inclusion map $i_{\cU}\colon \cU\to \cG$ is groupoid homotopic to a generalized constant map $(\e, c)$.
\end{definition}

In other words,  the diagram $$\xymatrix{\cL\ar[r]^-{c}\ar[d]_{\e}&\mathcal{O}\ar[d]^{}\\ \cU\ar[r]^{}&\cG}$$ is commutative up to groupoid homotopy 
where $\e$ is an equivariant essential equivalence and $\mathcal{O}$ an orbit. 

Now we can make the following definition (see \cite{Co2}).

\begin{definition} Let $\cG= G_1\rightrightarrows G_0$ be a Lie groupoid. The {\em groupoid Lusternik-Schnirelmann} or {\em groupoid LS-category}, $\cat(\cG)$, is the least number of  invariant open sets $U$ needed to cover $G_0$ such that  the restricted groupoid $\cU$ is  $\cG$-categorical. 

If $G_0$ cannot be covered by a finite number of such open sets, we will say that  $\cat(\cG)=\infty$.
\end{definition}

We have the following important property of the groupoid Lusternik-Schnirelmann category (see \cite{Co1}).

\begin{theorem} \label{MI}
The Lusternik-Schnirelmann category of a Lie groupoid is invariant under Morita equivalence of Lie groupoids, i.e. if $\cG$ is a Lie groupoid which is Morita equivalent to a Lie groupoid $\cG'$, then we have
$$\cat(\cG)=\cat(\cG').$$
\end{theorem}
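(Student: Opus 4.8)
The plan is to reduce the statement to the case of a single essential equivalence and then to exploit that every ingredient in the definition of $\cat(\cG)$ --- groupoid homotopy, generalized constant maps, and hence $\cG$-categoricity --- is formulated entirely inside the Morita bicategory $\LG(E^{-1})$, where essential equivalences are invertible. Since $\cG$ and $\cG'$ being Morita equivalent means there is a third groupoid $\cK$ with essential equivalences $\cG \xleftarrow{\phi} \cK \xrightarrow{\psi} \cG'$, it suffices to prove $\cat(\cK) = \cat(\cG)$ whenever $\phi\colon \cK \to \cG$ is an essential equivalence; applying this to both legs then yields $\cat(\cG) = \cat(\cK) = \cat(\cG')$.

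First I would set up a bijection between the invariant open subsets of $G_0$ and those of $K_0$. Because $\phi$ is an essential equivalence, $\phi_0$ induces a homeomorphism of orbit spaces $K_0/\cK \cong G_0/\cG$, and by definition invariant open sets are exactly the preimages of open sets in the orbit space. A short diagram chase with the quotient maps shows that $U \mapsto \phi_0^{-1}(U)$ is a bijection from invariant open subsets of $G_0$ to invariant open subsets of $K_0$, with inverse $W \mapsto \phi_0(W)$; it preserves unions, hence sends invariant open covers of $G_0$ to invariant open covers of $K_0$ of the same cardinality, and conversely.

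Next, for an invariant open $U \subset G_0$ with $V := \phi_0^{-1}(U)$, I would check that the restriction $\phi|_V \colon \cV = \cK|_V \to \cU = \cG|_U$ is again an essential equivalence: surjectivity of $\phi_0|_V$ onto $U$ follows from surjectivity of $\phi_0$, and the cartesian square defining an essential equivalence restricts to the full subgroupoids over $U$ and $V$. The heart of the argument is then to show that $\cU$ is $\cG$-categorical if and only if $\cV$ is $\cK$-categorical. This is where I would use that $\phi$ and $\phi|_V$ are invertible in $\LG(E^{-1})$: the inclusions $i_{\cU}\colon \cU \to \cG$ and $i_{\cV}\colon \cV \to \cK$ fit into a square commuting up to these essential equivalences, so as $1$-morphisms in the Morita bicategory they are identified under the induced isomorphism. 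Since groupoid homotopy is precisely homotopy of such $1$-morphisms, $i_{\cU}$ is groupoid homotopic to a generalized constant map exactly when $i_{\cV}$ is. Here one also uses that essential equivalences carry orbits bijectively to orbits, so that the image of a generalized constant map remains inside a single orbit after transport across $\phi$.

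Combining the three steps, the bijection above restricts to a bijection between $\cG$-categorical invariant open covers of $G_0$ and $\cK$-categorical invariant open covers of $K_0$, preserving cardinality, whence the minimal such cardinalities agree and $\cat(\cG) = \cat(\cK)$. The main obstacle is the \emph{if and only if} of the third step: one must verify carefully that pre- and post-composition with an essential equivalence preserves both the relation of being groupoid homotopic and the property of being a generalized constant map. This amounts to checking that the homotopy data --- the auxiliary action groupoids $\cL_i$, the equivariant essential equivalences $u_i, v_i$, and the map into the path groupoid $P\cG$ --- can be transported along $\phi$, using that the path groupoid construction is functorial on the bicategory and that essential equivalences compose. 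I would isolate this transport as a preliminary lemma asserting that groupoid homotopy and generalized constant maps are invariant under essential equivalence, proved directly from the bicategorical definitions.
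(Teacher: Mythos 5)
The paper never proves Theorem \ref{MI}: the result is imported verbatim from \cite{Co1}, so the only meaningful comparison is with the argument given in that reference, and your sketch reconstructs that argument in its essentials. The reduction to a single essential equivalence $\phi\colon\cK\to\cG$, the orbit-space correspondence of invariant open sets, the observation that $\phi\circ i_{\cV}=i_{\cU}\circ\phi|_V$ commutes strictly (so the two inclusions are identified in $\LG(E^{-1})$), and the transport of homotopy and constancy data across inverted essential equivalences is precisely the strategy of \cite{Co1}, and your outline is correct. Two remarks. First, under this paper's definition of Morita morphism ($\phi_0$ a surjective submersion plus the cartesian square) your bijection $U\mapsto\phi_0^{-1}(U)$ is sound: $\phi_0$ is open because it is a submersion, and full faithfulness gives $\phi_0^{-1}(\phi_0(W))=W$ for invariant $W$, so $W\mapsto\phi_0(W)$ really is inverse to it. But in \cite{Co1} essential equivalences are taken in the weaker sense, where only $t\circ\mathrm{pr}_1\colon G_1\times_{G_0}K_0\to G_0$ is required to be a surjective submersion; with that definition $\phi_0$ need not be surjective and the inverse correspondence must be the saturation of $\phi_0(W)$ rather than $\phi_0(W)$ itself, so your diagram chase would need adjusting if you intend the proof to cover that setting. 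Second, you correctly identify the technical heart --- that groupoid homotopy is a congruence for composition in the Morita bicategory, and that the auxiliary data (the action groupoids $\cL_i$ and the equivariant essential equivalences $u_i$, $v_i$ in the definition of groupoid homotopy) can be transported along $\phi$ --- and you rightly isolate it as a preliminary lemma. Be aware that this lemma is where most of the labour in \cite{Co1} is actually spent (it amounts to showing essential equivalences are homotopy equivalences in the Morita bicategory, with the action-groupoid and equivariance constraints preserved under the relevant compositions); as written your proof is a correct skeleton whose load-bearing step is deferred to that lemma rather than carried out, but this is a matter of completeness, not a gap in the approach.
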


The groupoid Lusternik-Schnirelmann category also generalizes the ordinary Lusternik-Schnirelmann category of a smooth manifold. In fact,  if $\cG=u(M)$ is the unit groupoid, then we have $\cat(\cG)=\cat(M)$, where $\cat$ on the right hand side means the ordinary Lusternik-Schnirelmann category of a smooth manifold.

\section{Lusternik-Schnirelmann category of a differentiable stack}
Using homotopical properties of differentiable stacks we will now introduce the Lusternik-Schnirelmann category of a differentiable stacks. Let $\X$ be a differentiable stack. Consider the path stack $P\X = \SMap([0, 1], \X)$ of $\X$ as defined by Noohi in \cite{No1} for general topological stacks. We will say that the morphisms $f:\X\to\Y$ and $g:\X\to\Y$ between differentiable stacks are {\em homotopic} if there exists a morphism of stacks $H:\X\to P\Y$ such that the following diagram of stack morphisms is commutative up to natural transformations:

\[
\xymatrix{ \Y& P\Y \ar[r]^{\ev_1}\ar[l]_{\ev_0}&\Y\\ 
&\X\ar[lu]^{f}\ar[ru]_{g}\ar[u]^{H}&}
\]

\begin{definition} Let $\pi: \X\to \S$ be a differentiable stack with atlas $x: X\to \X$
and $U\subset X$ be an open set. We will say that the restricted substack $\U$ is $\X$-{\em categorical} if the inclusion map $i_{\U}\colon \U\to \X$ is homotopic to a constant morphism $c:\U \to \X$ between differentiable stacks.
\end{definition}

For instance, in Example \ref{circle} for the stack $\X=[S^1/S^1]$ let $U$ be the set of triples $(P, S, \mu)$, where $S$ is a smooth manifold, $P$ a $S^1$-torsor over $S$ and $\mu: P\rightarrow S^1$ a $S^1$-equivariant smooth map. That is, $\U=[S^1/S^1]$. We have that the stack $\U$ is $\X$-categorical since the identity map $\id: [S^1/S^1]\to [S^1/S^1]$ is homotopic to a constant morphism of stacks.

\begin{definition} Let $\pi: \X\to \S$ be a differentiable stack with atlas $x: X\to \X$. The {\em stacky Lusternik-Schnirelmann} or {\em stacky LS-category}, $\cat(\X)$, is the least number of open sets $U$ needed to 
cover $X$ such that  the restricted substack $\cU$ is $\X$-categorical. 

If $X$ cannot be covered by a finite number of such open sets, we will say that  $\cat(\X)=\infty$.
\end{definition}

\begin{example} 
Let $X$ be a smooth manifold.  It follows immediately from the above definition that the stacky LS-category $\cat({\underline{X}})$ is equal to the classical LS-category $\cat(X)$ of the manifold. 
\end{example} 

\begin{example}
From Example \ref{circle}, we get immediately for the LS-category of the quotient stack $[S^1/S^1]$ that $\cat([S^1/S^1])=1$.
\end{example}

The following theorems establish the relationship between Lusternik-Schnirelmann category of differentiable stacks and Lie groupoids (see also \cite{CN}).

\begin{theorem}
Let $\X$ be a differentiable stack with a given presentation $x: X_0\rightarrow {\X}$ and associated Lie groupoid $\cG(x)=X_0\times_{\X} X_0\toto X_0$. Then
$$\cat(\X)=\cat(\cG(x)).$$
\end{theorem}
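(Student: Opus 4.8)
The plan is to establish the equality $\cat(\X) = \cat(\cG(x))$ by constructing a bijection between the coverings that realise each side, showing that a cover of $X$ by $\X$-categorical substacks corresponds exactly to a cover of $G_0 = X$ by $\cG(x)$-categorical restricted groupoids, and vice versa. The essential point is that both invariants are defined by covering the \emph{same} underlying manifold $X = G_0$ by open sets subject to a categoricity condition, so the theorem reduces to proving that an open set $U \subset X$ yields an $\X$-categorical restricted substack $\U$ if and only if it yields a $\cG(x)$-categorical restricted groupoid $\cU$. Since both $\cat$'s are defined as the least number of open sets in such a cover, matching the categoricity conditions open-set-by-open-set immediately forces equality of the two minima (including the case $\infty$).

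First I would make precise the dictionary between the two constructions on a fixed open $U \subset X$. On the groupoid side, $\cU = \cG(x)|_U$ is the full groupoid on the invariant saturation $U_0$, with $U_1 = \{g \in X_1 : s(g), t(g) \in U_0\}$; note that for an atlas-induced groupoid the arrows $X_1 = X \times_\X X$ encode precisely the isomorphisms in the fibre category, so the invariant saturation $U_0$ agrees with the saturation used to define the restricted substack $\U$ in the previous section. I would verify that under the equivalence $\B\cG(x) \simeq \X$ from Theorem~1.16 (the correspondence $\X = \B\cG$), the restricted substack $\U$ is the classifying stack $\B\cU$ of the restricted groupoid, i.e. that restriction commutes with the stack/groupoid correspondence. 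This is the structural backbone: it lets me transport the notion of categoricity across the equivalence.

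The heart of the argument is to show that the two homotopies coincide under this dictionary. On the stack side, $\U$ is $\X$-categorical means $i_\U \colon \U \to \X$ is homotopic to a constant morphism via the path stack $P\X = \SMap([0,1],\X)$; on the groupoid side, $\cU$ is $\cG(x)$-categorical means the inclusion $i_\cU$ is groupoid homotopic to a generalized constant map via the path groupoid $P\cG(x) = \Map(\cI, \cG(x))$ in the Morita bicategory $\LG(E^{-1})$. I would argue that the stack homotopy and the groupoid homotopy correspond to each other under the equivalence of bicategories between differentiable stacks and Lie groupoids localised at essential equivalences: a constant morphism of stacks (with image factoring through a point-presentation of an orbit) translates exactly into a generalized constant map landing in a single orbit $\mathcal O$, and conversely. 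Here I would invoke that $P\X$ presents the same mapping stack as $\B(P\cG(x))$, so that a homotopy $H \colon \U \to P\X$ is interchangeable with the generalized homotopy data $(\e, H) \colon \cK \gets \tilde\cK \to P\cG(x)$.

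The main obstacle I expect is the careful handling of the path objects and the constant-map condition through the Morita localisation. Matching $P\X = \SMap([0,1],\X)$ with the bicategorical mapping groupoid $\Map(\cI,\cG(x))$ requires knowing that Noohi's topological-stack path construction agrees with the path groupoid up to Morita equivalence, and that the evaluation morphisms $\ev_0, \ev_1$ correspond on both sides; this is where I would lean hardest on the cited homotopical results (\cite{No1,No2,FeN}) and the relation $\X = \B\cG$. A secondary subtlety is confirming that ``constant morphism of stacks'' and ``generalized constant map'' really do coincide: both must be shown to mean precisely ``factors up to homotopy through a single orbit,'' and I would check the orbit $\mathcal O$ appearing in the groupoid diagram is exactly the image of the point-presentation appearing in the stack-theoretic constant morphism. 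Once these identifications are in place, the count of open sets is literally the same on both sides, and the equality $\cat(\X) = \cat(\cG(x))$ follows, with the infinite case handled simultaneously since neither admits a finite categorical cover precisely when the other does not.
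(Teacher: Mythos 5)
Your proposal is correct and takes essentially the same route as the paper: the paper's own proof is just the terse observation that, for the fixed atlas $x$, the stacky and groupoid notions of categorical cover coincide open-set-by-open-set over $X_0$ (which you unwind in detail, matching saturations, path objects and constant morphisms), combined with an appeal to Morita invariance via Proposition \ref{MIstpres} to ensure independence of the chosen presentation. The dictionary you make explicit is precisely the definitional content the paper leaves implicit.
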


\begin{proof} This follows from the definition of LS-category for Lie groupoids. The fact that LS-category of Lie groupoids is Morita invariant, implies now by using Proposition \ref{MIstpres} that it does not depend on the chosen presentation for the differentiable stack $\X$, which gives the result.
\end{proof}

\begin{theorem}
Let $\cG$ be a Lie groupoid and $\B\cG$ be the associated differentiable stack. Then 
$$\cat(\B\cG)=\cat(\cG).$$
\end{theorem}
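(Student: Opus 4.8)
The plan is to reduce the statement entirely to the theorem just proved, which computes the stacky LS-category of a differentiable stack in terms of the Lie groupoid associated to any chosen presentation. The key observation is that the classifying stack $\B\cG$ comes equipped with a canonical atlas. Concretely, I would first invoke the structure theorem for $\B\cG$ recorded earlier in the excerpt, which guarantees that $\tau_0\colon G_0\to \B\cG$ is a presentation of the differentiable stack $\B\cG$. Taking $\X=\B\cG$ together with this particular atlas in the previous theorem then yields immediately
$$\cat(\B\cG)=\cat(\cG(\tau_0)),$$
where $\cG(\tau_0)=G_0\times_{\B\cG}G_0\toto G_0$ is the Lie groupoid associated to the atlas $\tau_0$.

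Next I would use the fact stated just after that structure theorem, namely that the Lie groupoid $\cG(\tau_0)$ built from the canonical atlas $\tau_0$ is isomorphic to the original Lie groupoid $\cG$. Since an isomorphism of Lie groupoids is in particular an essential equivalence, $\cG$ and $\cG(\tau_0)$ are Morita equivalent. By the Morita invariance of the groupoid LS-category (Theorem \ref{MI}) this gives
$$\cat(\cG(\tau_0))=\cat(\cG).$$
Chaining the two equalities produces the desired identity $\cat(\B\cG)=\cat(\cG)$.

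Because every ingredient is already in place, there is no genuine obstacle here; the argument is purely a matter of assembling the previous theorem, the isomorphism $\cG\cong\cG(\tau_0)$, and Morita invariance. The one point requiring mild care is to ensure that the presentation fed into the previous theorem is taken to be precisely the canonical atlas $\tau_0$ (this is exactly the content of the structure theorem for $\B\cG$), and that the identification $\cG(\tau_0)\cong\cG$ is understood as an isomorphism of Lie groupoids, respecting all the structure maps $s,t,m,i,e$, rather than merely a pair of diffeomorphisms on objects and arrows. Only such a structural isomorphism guarantees that the notions of invariant open set and generalized constant map entering the definition of $\cat$ are carried across faithfully; granting this, Morita invariance does the rest and the proof is complete.
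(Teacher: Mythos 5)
Your proof is correct and follows essentially the same route as the paper's: present $\B\cG$ by the canonical atlas $\tau_0\colon G_0\to \B\cG$, identify the associated Lie groupoid $\cG(\tau_0)$ with $\cG$, and conclude via the preceding theorem together with Morita invariance of the groupoid LS-category. Your write-up is merely more explicit than the paper's terse proof (which cites the stack-isomorphism/Morita-equivalence theorem where you invoke the isomorphism $\cG\cong\cG(\tau_0)$ recorded after the torsor theorem), but the substance is identical.
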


\begin{proof}
This follows from the explicit construction of the classifying stack $\B\cG$ of $\cG$-torsors for the given Lie groupoid $\cG$. Now the associated Lie groupoid of the differentiable stack $\B\cG$ is Morita equivalent to the given Lie groupoid $\cG$ following Theorem \ref{MIst} above.
\end{proof}

\begin{example}
Consider the action groupoid $\cG=S^1\times S^3 \rightrightarrows S^3$ defined by the action of the circle $S^1$ on the $3$-sphere $S^3$ given by  $(v,w)\mapsto (z^3v,zw)$.

If we think of $S^3$ as the union of two solid tori, we have that the orbits of this action are circles of length $2\pi$ for points on the two cores $C_1$  and $C_2$ and circles of length $2\pi\sqrt{3\|v\|^2+\|w\|^2}$ elsewhere.   We construct a $\cG$-categorical covering $\{ \cU_1, \cU_2 \}$ by subgroupoids of $\cG$ given by considering the open sets  $U_1=S^3-C_2$ and $U_2=S^3-C_1$. We have that each inclusion $i_{\cU_i}\colon \cU_i \to \cG$ is groupoid homotopic to a generalized constant map with image in each respective core $C_i$. Moreover, this groupoid is not $\cG$-contractible and we have that $\cat (\cG)=2$.

Now for the differentiable stack $[S^3/S^1]$ associated to this Lie groupoid, which in fact describes the teardrop orbifold, we get from the last theorem that $\cat ([S^3/S^1])=2$.
\end{example}

More generally, given any quotient stack $[X/G]$ of a Lie group action on a smooth manifold, we see that the stacky LS-category of $[X/G]$ is equal to the groupoid LS-category of the action groupoid $G\times X\rightrightarrows X$, i.e. $\cat([X/G])=\cat(G\times X\rightrightarrows X)$. This stacky LS-category of a quotient stack in fact generalizes also the notion of equivariant LS-category $\cat_G(X)$ for Lie group actions on manifolds as introduced before by Marzantowicz \cite{Ma, Co0}. We also aim to study the relationship between stacky LS-category and equivariant topology in forthcoming work. 

Many of the particular examples and properties of LS-category for Lie groupoids as discussed in \cite{Co1}, especially concerning Lie group actions on smooth manifolds have interesting stacky analogs and will be discussed in detail in \cite{CN}. For example, orbifolds in general can naturally be seen as particular differentiable stacks associated to proper \'etale Lie groupoids and therefore give rise to a variety of interesting examples for LS-category of differentiable stacks and its relation with Morse theory, which we will also explore in detail in \cite{CN}.\\

\noindent {\it Acknowledgements:} 
The first and third author like to thank Sadok Kallel and the American University of Sharjah, UAE for financial support where part of this work was presented at the Second International Conference on Mathematics and Statistics. Both also like to thank the University of Leicester for additional support. 
The second author is supported in part by the Simons Foundation. Finally, the second and third authors also like to thank the Centro de Investigaci\'on en
Matem\'aticas (CIMAT) in Guanajuato for the kind hospitality and support while this project was pursued.


\begin{thebibliography}{99}

\bibitem[SGA4]{SGA4} M. Artin, A. Grothendieck, J.-L. Verdier, Th\'eorie des topos et cohomolgie \'etale des sch\'emas, {\it S\'eminaire de g\'eometrie alg\'ebrique du Bois-Marie (SGA 4)}, Lecture Notes in Mathematics, Vol. {\bf 269, 270, 305}, Springer-Verlag, Berlin-New York, 1972-1973.

\bibitem[Al]{Al} S. Alsulami, {\it Homotopy Types of Topological Groupoids and Lusternik-Schnirelmann Category of Topological Stacks,} PhD thesis, University of Leicester, May 2016.

\bibitem[B1]{B1} K. Behrend, On the de Rham cohomology of differential and algebraic Stacks,
{\it Adv. Math.} {\bf 198} (2005), 583-622.

\bibitem[B2]{B2} K. Behrend, Cohomology of Stacks, Intersection theory and moduli, ICTP Lect. Notes XIX, Abdus Salam Int. Cent. Theor. Physics, Trieste 2004, 249-294.

\bibitem[BX]{BX} K. Behrend, P. Xu, Differentiable Stacks and Gerbes, {\it Jour. Symp. Geom.} {\bf 9} (2011), 285--341.

\bibitem[BN]{BN} I. Biswas, F. Neumann,  Atiyah sequences, connections and characteristic forms for principal bundles over groupoids and stacks,
{\it Comp. Ren. Math. Acad. Sci. Paris} {\bf 352} (2014), 59--64.

\bibitem[CLW]{CLW} B. Collier, E. Lerman, S. Wolbert, Parallel transport on principal bundles over stacks, preprint, {\tt arXiv:math.DG/1509.05000}

\bibitem[Co1]{Co0} H. Colman, Equivariant LS-category for finite group actions. {\em Lusternik-Schnirelmann category and related topics (South Hadley, MA, 2001)}, {\it Contemp. Math.}, {\bf 316} (2002), 35--40. 

\bibitem[Co2]{Co1} H. Colman, The Lusternik-Schnirelmann category of a Lie groupoid, {\it Trans. AMS}
{\bf 362} (10) (2010), 5529--5567.

\bibitem[Co3]{Co2} H. Colman, On the $1$-Homotopy Type of Lie Groupoids, {\it Appl. Categor. Structures} {\bf 19} (2011), 393--423.

\bibitem[CN]{CN} H. Colman, F. Neumann, Lusternik-Schnirelmann theory for stacks, {\it In preparation}. 

\bibitem[CLOT]{CLOT} O. Cornea, G. Lupton, J. Oprea, D. Tanr\'e, {\it Lusternik-Schnirelmann category}, Mathematical Surveys and Monographs, 103. American Mathematical Society, Providence, RI, 2003

\bibitem[Fa]{Fa} B. Fantechi, Stacks for everybody, {\it European Congress of Mathematics, Barcelona 2000}, Vol. {\bf I}, Birkh\"auser
Verlag, 2001, 349--359.

\bibitem[FeN]{FeN} M. Felisatti, F. Neumann, Secondary theories for \'etale groupoids, {\it Regulators (Barcelona 2010)}, {\it Contemp. Math.} {\bf 571} (2012), 135--151.

\bibitem[Gr]{Gr} A. Grothendieck, Revetements \'etale et groupe fundamental, {\it S\'eminaire de g\'eometrie
alg\'ebrique du Bois-Marie 1960-1961 (SGA 1)}, Lecture Notes in Mathematics, Vol. {\bf 224}, Springer-Verlag, Berlin-New York, 1971.

\bibitem[H]{H} J. Heinloth, Notes on differentiable stacks, Mathematisches Institut Seminars 2004-2005, (Y. Tschinkel ed.),
Georg-August Universit\"at G\"ottingen, 1-32.

\bibitem[I-Z]{I-Z} P. Iglesias-Zemmour, {\it Diffeology}, Mathematical Surveys and Monographs 185, American Mathematical
Society, Providence, RI, 2013.

\bibitem[Ja1]{Ja1} I. M. James, On category, in the sense of Lusternik-Schnirelmann, {\it Topology} {\bf 17} (1978), no. 4, 331--348.

\bibitem[Ja2]{Ja2} I. M. James, Lusternik-Schnirelmann category, in: Handbook of Algebraic Topology,
Elsevier Science, Amsterdam, 1995, 1293--1310.

\bibitem[LTX]{LTX} C. Laurent-Gengoux, J.-L. Tu, P. Xu, Chern-Weil map for principal bundles over groupoids, {\it Math. Zeit.} {\bf 255} (2007), 451--491.

\bibitem[LS]{LS} L. Lusternik, L. Schnirelmann {\it M\'ethodes  topologiques dans les Probl\`emes Variationnels,} Hermann, Paris, 1934.

\bibitem[Ma]{Ma} W. Marzantowicz, A $G$-Lusternik-Schnirelman category of space with an action of a compact Lie group, {\it Topology} {\bf 28} (1989), no. 4, 403--412.

\bibitem[MM]{MM} I. Moerdijk, J. Mr\v{c}un,  {\it Introduction to foliations and Lie groupoids}, Cambridge Studies in Advanced Mathematics {\bf 91}, Cambridge University Press,  2003.

\bibitem[No1]{No1} B. Noohi, Mapping stacks of topological stacks, {\it J. Reine Angew. Math.}, {\bf 646} (2010), 117-133.

\bibitem[No2]{No2} B. Noohi, Homotopy types of topological stacks, {\it Adv. Math.} {\bf 230} (2012), 2014--2047.

\bibitem[Se]{S} G. Segal, Classifying spaces and spectral sequences, {\it Inst. Hautes \'Etudes Sci.  Publ. Math.} No. {\bf 34} (1968), 105--112.

\bibitem[So]{So} J.M. Souriau, Groupes differentiels, Lecture Notes in Mathematics, Vol. {\bf 836}, Springer-Verlag, Berlin-New York, 1980, 91--128.

\bibitem[TXL]{TXL}  L. Tu, P. Xu, C. Laurent-Gengoux, Twisted K-theory of differentiable stacks, {\it Ann. Scient. \'Ec. Norm. Sup}
{\bf 37} (2004), 841--910.

\end{thebibliography}
\end{document}